\numberwithin{equation}{section}
\theoremstyle{plain}
\newtheorem{thm}{Theorem}[section]
\newtheorem{lemma}{Lemma}[section]
\theoremstyle{remark}
\newtheorem{remark}{Remark}[section]
\def\citeapos#1{\citeauthor{#1}'s (\citeyear{#1})}
\def\DS{{ \mathrm{\scriptscriptstyle DS} }}
\def\JS{{ \mathrm{\scriptscriptstyle JS} }}
\def\ZH{{ \mathrm{\scriptscriptstyle ZH} }}
\def\LP{{ \mathrm{\scriptscriptstyle LP} }}
\def\PR{{\mathrm{Pr}}}
\begin{document}

\begin{frontmatter}
\title{$\ell_p$-norm based James-Stein estimation with minimaxity and sparsity}% for a multinormal mean}
\runtitle{James-Stein estimation with sparsity}

\begin{aug}
\author{\fnms{Yuzo} \snm{Maruyama}
%\thanksref{t1}
\ead[label=e1]{maruyama@csis.u-tokyo.ac.jp}}
%\thankstext{t1}{This work was partially supported by KAKENHI \#23740067 \& \#25330035.}
\runauthor{Y. Maruyama}

\affiliation{The University of Tokyo}
\address{University of Tokyo \\
\printead{e1}}
\end{aug}
\begin{abstract}
A new class of minimax Stein-type shrinkage estimators of a multivariate normal mean is studied
where the shrinkage factor is based on an $\ell_p$ norm. The proposed estimators allow some but not
all coordinates to be estimated by $0$ thereby allow sparsity as well as minimaxity.
\end{abstract}
\begin{keyword}[class=AMS]
\kwd[Primary ]{62C20}
\kwd[; secondary ]{62J07}
\end{keyword}

\begin{keyword}
\kwd{James-Stein estimator}
\kwd{minimaxity}
\kwd{sparsity}
\end{keyword}

\end{frontmatter}

\section{Introduction}
Let $Z\sim N_d(\theta,\sigma^2I_d)$.
We are interested in estimation of the mean vector $\theta$
with respect to the quadratic loss function 
$ L(\delta,\theta)=\sum\nolimits_{i=1}^d(\delta_{i}-\theta_{i})^2/\sigma^2$.
Obviously the risk of $z$ is $d$. 
We shall say one estimator is as good as the other if the former has a risk no greater than 
the latter for every $\theta$. 
Moreover, one dominates the other if it is as good as the other and has smaller risk for
some $\theta$. In this case, 
the latter is called inadmissible.
Note that $z$ is a minimax estimator, that is, 
it minimizes $\sup_\theta E[L(\delta,\theta)]$ among all estimators $\delta$.
Consequently any $\delta$ is as good as $z$ if and only if it is minimax.

\cite{Stein-1956} showed that $z$ is inadmissible when $d\geq 3$. 
\cite{James-Stein-1961} explicitly found a class of minimax estimators 
\begin{align*}
  \hat{\theta}_{\JS}=\left(1-\frac{c\sigma^2}{\|z\|_2^2}\right)z
\end{align*}
%$\hat{\theta}_{\JS}=\left(1-c\sigma^2/\|z\|_2^2\right)z$
with $0\leq c\leq 2(d-2)$ and $\|z\|^2_2=\sum_{i=1}^d z_i^2$.
\cite{Baranchik-1964} proposed the James-Stein positive-part estimator
\begin{equation}
  \hat{\theta}^+_{\JS}=\max\left(0,1-\frac{c\sigma^2}{\|z\|_2^2}\right)z
\end{equation}
with $0< c\leq 2(d-2)$ which dominates the James-Stein estimator.
A problem with the James-Stein positive-part estimator is, however, that it selects
only between two models: the origin and the full model. 
\cite{Zhou-Hwang-2005} overcome the difficulty by utilizing the so-called $\ell_p$-norm
given by
\begin{equation}\label{def:pnorm}
\textstyle \|z\|_p=\left\{\sum\nolimits_{i=1}^d|z_i|^p\right\}^{1/p}
\end{equation}
and in fact proposed minimax estimators $ \hat{\theta}^+_{\ZH}$ with the $i$-th component given by
\begin{equation}
 \hat{\theta}^+_{i\ZH} = \max\left( 0, 1-\frac{c\sigma^2}{\|z\|^{2-\alpha}_{2-\alpha}|z_i|^{\alpha}}\right)z_i
\end{equation}
where $ 0\leq \alpha <(d-2)/(d-1)$ and $ 0<c\leq 2\left\{(d-2)-\alpha(d-1)\right\} $.
When $\alpha>0$, the $i$-th component of the estimator with
\begin{equation}\label{eq:sparse_z_i}
 \frac{|z_i|}{\sigma} \leq c^{1/\alpha}\left(\frac{\sigma}{\|z\|_{2-\alpha}}\right)^{(2-\alpha)/\alpha}
%  \left(c\frac{c}{\{\|z\|_{2-\alpha}/\sigma\}^{2-\alpha}}\right)^{1/\alpha},
\end{equation}
becomes zero.
Hence the choice between a full model and
reduced models, where some coefficients are reduced to zero, is possible.

In this paper, we establish minimaxity of a new class of $\ell_p$-norm based shrinkage estimators 
$\hat{\theta}^+_{\LP}$ with the $i$-th component given by
\begin{equation}\label{eq:shrinkage_estimator_1}
 \hat{\theta}^+_{i\LP} 
= \max\left(0,1-\frac{c\sigma^2}{\|z\|_p^{2-\alpha}|z_i|^\alpha}\right)z_{i} 
\end{equation}
where $0\leq \alpha<(d-2)/(d-1)$, $p>0$,  $0< c\leq 2(d-2)\gamma(d,p,\alpha)$ and
\begin{equation*}
 \gamma(d,p,\alpha)=\min(1,d^{(2-p-\alpha)/p})\left\{1-\alpha\frac{d-1}{d-2}\right\}.
\end{equation*}
When $\alpha$ is strictly positive in \eqref{eq:shrinkage_estimator_1}, sparsity happens as in \eqref{eq:sparse_z_i}.
In \cite{Zhou-Hwang-2005}, $p=2-\alpha$ was assumed and the $\ell_p$-norm with 
\begin{equation*}
d/(d-1)<p[=2-\alpha]<2
\end{equation*}
was treated. From their proof, the choice of $p=2-\alpha$ 
seemed only applicable for constructing estimators with minimaxity and sparsity simultaneously.
We produce such minimax estimators based on the $\ell_p$-norm for all $p>0$.
As an extreme  case ($p=\infty$), we can show that
\begin{equation*}
 \max\left(0,1-\sigma^2\frac{2\{(d-2)-\alpha(d-1)\}}{d \left\{\max |z_i|\right\}^{2-\alpha}|z_i|^{\alpha}}
\right)z_i
\end{equation*}
with $0\leq \alpha<(d-2)/(d-1)$ is minimax.
A more general result of minimaxity, 
corresponding to  the result of \cite{Efron-Morris-1976},
where $c$ is replaced by $\phi(\|z\|_p/\sigma)$ in \eqref{eq:shrinkage_estimator_1},
is given in Section \ref{sec:main_1}. 
In Section \ref{sec:main_2}, the corresponding results for unknown $\sigma^2$ are presented.
%In Section \ref{sec:main_2}, we derive similar results for minimax estimation of
%a sparse group of sparse vectors.
%More specifically, 
%we observe $m$ independent $n$-dimensional Gaussian vectors 
%$z_j\sim N_n(\mu_j,I_n)$ for $j=1,\dots,m$ and 
%the goal is to estimate the unknown mean vectors $\mu_1,\dots,\mu_m$.
%The key assumptions are that some of the vectors $\mu_j$ are identically zero vectors,
%that even within nonzero vectors $\mu_j$, most of their components are zero, and that
%neither the indices of nonzero vectors $\mu_j$ nor the locations of their significant components 
%are known in advance.

\section{Minimaxity with sparsity: known scale}
\label{sec:main_1}
In this section, we assume that $\sigma^2$ is known and
establish conditions under which estimators $\hat{\theta}_\phi$ of the form
\begin{equation}\label{eq:shrinkage_estimator_3}
 \hat{\theta}_{i\phi} 
= \left(1-\frac{\sigma^2\phi(\|z\|_p/\sigma)}{\|z\|_p^{2-\alpha}|z_i|^\alpha}\right)z_{i}
\end{equation}
as the $i$-th component, are minimax.
Note the shrinkage factor of \eqref{eq:shrinkage_estimator_3},
$ 1-\sigma^2\phi(\|z\|_p/\sigma)/\left\{\|z\|_p^{2-\alpha}|z_i|^\alpha\right\}$ 
is symmetric with respect to $z_i$.
As shown in Theorem 4 of \cite{Zhou-Hwang-2005}, % as well as Theorem 6.2 of \cite{Lehmann-Casella-1998},
the shrinkage estimator with the symmetry is dominated by the positive-part estimator.
Hence the minimaxity of $\hat{\theta}^+_\phi$ follows from the minimaxity of $\hat{\theta}_\phi$.

Recall that the risk of $z$ is equal to $d$ or finite.
Hence a straightforward application of Schwarz's inequality shows that the risk of
$z+\xi(z)$ is finite if and only if
\begin{equation}\label{eq:risk_finite}
 E\left[\sum\nolimits_{i=1}^d\{\xi_i(z)\}^2/\sigma^2\right]<\infty.
\end{equation}
In that case, \citeapos{Stein-1981} identity states
that if $\xi(z)$ is absolutely continuous, we have
\begin{equation}\label{Stein_Identity}
 E\left[(z_i-\theta_i)\xi(z)\right]=\sigma^2 E\left[(\partial/\partial z_i)\xi(z)\right]
\end{equation}
for $i=1,\dots,d$ and each expectation exists.

In this paper, we assume $0\leq \alpha< 1$ and $\phi$ is bounded,
say $|\phi|\leq M$ for some $M>0$.
Under these assumptions, \eqref{eq:risk_finite} follows with $\xi=(\xi_1,\dots,\xi_d)'$
and
\begin{equation}\label{eq:gammagamma}
 \xi_i(z)=\hat{\theta}_{i\phi}-z_i=-\frac{\sigma^2\phi(\|z\|_p/\sigma)}{\|z\|_p^{2-\alpha}|z_i|^\alpha}z_i.
\end{equation}
In fact, we have
\begin{equation*}
 %\frac{\|\xi\|^2}{\sigma^2}=
 \frac{\sum_{i=1}^d\{\xi_i(z)\}^2}{\sigma^2}=
  \sigma^2\phi(\|z\|_p/\sigma)^2\frac{\|z\|_{2(1-\alpha)}^{2(1-\alpha)}}{\|z\|_p^{2(2-\alpha)}}
\leq \sigma^2M^2\frac{\|z\|_{2(1-\alpha)}^{2(1-\alpha)}}{\|z\|_p^{2(2-\alpha)}}
\end{equation*}
and further 
\begin{equation*} 
 \frac{\|z\|_{2(1-\alpha)}^{2(1-\alpha)}}{\|z\|_p^{2(2-\alpha)}}
    \leq \frac{\max(1,d^{(p-2+2\alpha)/\{2p(1-\alpha)\}})}{\|z\|_{2(1-\alpha)}^{2}}\leq \frac{\max(1,d^{(p-2+2\alpha)/\{2p(1-\alpha)\}})}{\|z\|_{2}^{2}}
\end{equation*}
by Part \ref{lem:norm:1} of Lemma \ref{lem:norm} in Appendix.
Since $E[\sigma^2/\|z\|_{2}^{2}]\leq 1/(d-2)$, for $\xi$ given by \eqref{eq:gammagamma} we have
\begin{equation*}
 E\left[\sum\nolimits_{i=1}^d\{\xi_i(z)\}^2/\sigma^2\right]\leq \frac{M^2\max(1,d^{(p-2+2\alpha)/\{2p(1-\alpha)\}})}{d-2}.
\end{equation*}
Hence under the assumption of bounded $\phi$, the risk of $\hat{\theta}_\phi$ given by
\eqref{eq:shrinkage_estimator_3} is finite.
Further, with an additional assumption that $\phi$ is absolutely continuous,
\citeapos{Stein-1981} identity given by \eqref{Stein_Identity} is available 
for derivation of \citeapos{Stein-1981} unbiased risk estimator.
%Under the assumption that 
%
%where 
%\begin{equation}\label{Stein_Identity}
% E\left[(z_i-\theta_i)h(z_1,\dots,z_d)\right]
%  =\sigma^2 E\left[(\partial/\partial z_i)h(z_1,\dots,z_d)\right] 
%\end{equation}
%with $z_i\sim N(\theta_i,\sigma^2)$ and absolutely continuous $h$.
\begin{lemma}\label{lem:stein_1}
Assume that $\phi(v)$ is bounded and absolutely continuous and that $ 0\leq \alpha < 1$. 
\begin{enumerate}
 \item \label{lem:stein_1_1}
The risk function of the
estimator $\hat{\theta}_\phi$ is
\begin{align} \label{eq:risk}
 E\left[ \frac{\|\hat{\theta}_\phi-\theta\|_2^2}{\sigma^2}\right]  
 =d+E\left[\sum\nolimits_{i}\left(\frac{|z_i|}{\|z\|_p}\right)^{p-\alpha}
 \frac{\phi(\|z\|_p/\sigma)\psi_\phi(z/\sigma)}{(\|z\|_p/\sigma)^2}
% \frac{\sum\nolimits_{i}\{|z_i|/\sigma\}^{p-\alpha}}{\{\|z\|_p/\sigma\}^{p+2-\alpha}}
\right]
\end{align}
where
 \begin{equation}\label{eq:psi_phi}
\begin{split}
 \psi_\phi(z)&=
 \frac{\phi(\|z\|_p)}{\|z\|_{p}^{-p-\alpha+2}}
 \frac{\sum_{i}|z_i|^{2(1-\alpha)}}{\sum\nolimits_{i}|z_i|^{p-\alpha}}
-2(1-\alpha)\|z\|_p^p \frac{\sum_i |z_i|^{-\alpha}}{\sum\nolimits_{i}|z_i|^{p-\alpha}}\\
&\quad -2 \left\{\alpha-2+\|z\|_p\frac{\phi'(\|z\|_p)}{\phi(\|z\|_p)}\right\}.
\end{split}
 \end{equation}
\item \label{lem:stein_1_2}
Assume $\phi(v)\geq 0$. Then $ \psi_\phi(z) \leq \Psi_\phi(\|z\|_p)$ where
\begin{equation}\label{eq:Psi_phi}
\Psi_\phi(v) =
\max(1,d^{(p+\alpha-2)/p})\phi(v)-2\left\{d-2-\alpha(d-1)\right\}-2\frac{v\phi'(v)}{\phi(v)}.
\end{equation}
\end{enumerate}
\end{lemma}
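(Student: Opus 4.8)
The plan for Part~(\ref{lem:stein_1_1}) is a direct application of \citeapos{Stein-1981} unbiased estimate of risk. By the scale structure of the problem — replacing $z$ by $z/\sigma$ and $\theta$ by $\theta/\sigma$ turns $\hat\theta_\phi$ into its $\sigma=1$ form and leaves the standardized loss unchanged — it suffices to do the computation for $\sigma=1$ and substitute $z\mapsto z/\sigma$ at the end. Writing $\hat\theta_{i\phi}=z_i+\xi_i(z)$ with $\xi_i$ as in \eqref{eq:gammagamma}, expand
\[
 E\bigl[\|\hat\theta_\phi-\theta\|_2^2\bigr]=d+\sum_{i=1}^d\Bigl(2E\bigl[(z_i-\theta_i)\xi_i(z)\bigr]+E\bigl[\xi_i(z)^2\bigr]\Bigr),
\]
use the finiteness of the risk and the absolute continuity of $\xi$ (both checked in the paragraph preceding the lemma) to replace each $E[(z_i-\theta_i)\xi_i(z)]$ by $E[\partial\xi_i/\partial z_i]$ via \eqref{Stein_Identity}, and compute $\partial\xi_i/\partial z_i$ by the chain rule from $\partial\|z\|_p/\partial z_i=\|z\|_p^{1-p}|z_i|^{p-2}z_i$ and $\partial(|z_i|^{-\alpha}z_i)/\partial z_i=(1-\alpha)|z_i|^{-\alpha}$, which hold a.e.\ for $p>0$, $0\le\alpha<1$.

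The derivative $\partial\xi_i/\partial z_i$ then breaks into three pieces — from differentiating $\phi(\|z\|_p)$, from differentiating $\|z\|_p^{\alpha-2}$, and from differentiating $|z_i|^{-\alpha}z_i$ — and, combined with $\sum_i\xi_i(z)^2$ and after pulling $\phi(\|z\|_p)\|z\|_p^{\alpha-p-2}\sum_i|z_i|^{p-\alpha}$ out of the sum, these assemble into precisely the three lines of $\psi_\phi$ in \eqref{eq:psi_phi}; writing $\|z\|_p^{\alpha-p-2}\sum_i|z_i|^{p-\alpha}=\|z\|_p^{-2}\sum_i(|z_i|/\|z\|_p)^{p-\alpha}$ produces \eqref{eq:risk}. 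This step is mechanical; the only care needed is in bookkeeping the powers of $\|z\|_p$ and the factor $\sigma$.

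For Part~(\ref{lem:stein_1_2}), the last summand $-2\{\alpha-2+\|z\|_p\phi'(\|z\|_p)/\phi(\|z\|_p)\}$ of $\psi_\phi$ already supplies the $-2v\phi'(v)/\phi(v)$ in $\Psi_\phi$, so only the other two summands have to be controlled. Since $\phi\ge0$, the first is bounded by
\[
 \phi(\|z\|_p)\,\|z\|_p^{\,p+\alpha-2}\,\frac{\|z\|_{2(1-\alpha)}^{2(1-\alpha)}}{\|z\|_{p-\alpha}^{p-\alpha}}\le\max\bigl(1,d^{(p+\alpha-2)/p}\bigr)\,\phi(\|z\|_p)
\]
via the norm inequality in Lemma~\ref{lem:norm} of the Appendix. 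For the term $-2(1-\alpha)\|z\|_p^p\sum_i|z_i|^{-\alpha}/\sum_i|z_i|^{p-\alpha}$, note that $-2\{d-2-\alpha(d-1)\}+2(\alpha-2)=-2d(1-\alpha)$, so — using $\|z\|_p^p=\sum_i|z_i|^p$ and dividing the desired bound by the negative quantity $-2(1-\alpha)$ — it reduces to
\[
 \Bigl(\sum\nolimits_i|z_i|^p\Bigr)\Bigl(\sum\nolimits_i|z_i|^{-\alpha}\Bigr)\ge d\sum\nolimits_i|z_i|^{p-\alpha},
\]
which is Chebyshev's sum inequality applied to the $d$-term sequences $|z_i|^p$ (nondecreasing in $|z_i|$) and $|z_i|^{-\alpha}$ (nonincreasing in $|z_i|$; an equality when $\alpha=0$). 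Summing the three bounds gives $\psi_\phi(z)\le\Psi_\phi(\|z\|_p)$ wherever $\phi(\|z\|_p)>0$, which is all that is used since $\psi_\phi$ enters \eqref{eq:risk} only through $\phi\,\psi_\phi$.

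No step here is deep. In Part~(\ref{lem:stein_1_1}) the main obstacle is organizational: carrying out the differentiation cleanly and recognizing that the regrouped terms are exactly $\psi_\phi$. In Part~(\ref{lem:stein_1_2}) the crux is noticing that the $\sum_i|z_i|^{-\alpha}$ term is governed precisely by Chebyshev's sum inequality and that the matching norm inequality for the $\|z\|_{2(1-\alpha)}$ term is available in the Appendix; the constant $\gamma(d,p,\alpha)$ from the Introduction does not appear at this stage but emerges afterward, on choosing $\phi$ (e.g.\ constant) to make $\Psi_\phi\le0$ and hence the risk in \eqref{eq:risk} at most $d$.
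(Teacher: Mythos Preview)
Your proposal is correct and follows essentially the same route as the paper: Part~\ref{lem:stein_1_1} is the Stein unbiased risk computation using the derivative formulas \eqref{eq:dif_p_norm}, and Part~\ref{lem:stein_1_2} bounds the first summand of $\psi_\phi$ via the norm-ratio inequality from the Appendix and the second via $\bigl(\sum_i|z_i|^p\bigr)\bigl(\sum_i|z_i|^{-\alpha}\bigr)\ge d\sum_i|z_i|^{p-\alpha}$. The only cosmetic difference is that you cite Chebyshev's sum inequality for this last step, whereas the paper packages it as the ``correlation inequality'' in Part~\ref{lem:norm:0} of Lemma~\ref{lem:norm}; these are the same statement.
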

\begin{proof}
From the invariance with respect to the transformation,
 $z\to cz$, we can take $c=1/\sigma$ and hence, without the loss of geniality, assume
 $\sigma^2=1$ in the proof.

[Part \ref{lem:stein_1_1}] Let $v=\|z\|_p$. 
Componentwisely we have
\begin{equation}\label{eq:quadratic_2}
\begin{split}
& (\hat{\theta}_{i}-\theta_{i})^2 
=\left\{\left(1-\phi(v)v^{\alpha-2}|z_i|^{-\alpha}\right)z_{i}-\theta_{i}\right\}^2 \\
&=(z_{i}-\theta_{i})^2+
\phi^2(v)v^{2(\alpha-2)}|z_{i}|^{2(1-\alpha)} -2(z_{i}-\theta_{i})
\left\{ \phi(v)v^{\alpha-2}|z_i|^{-\alpha}z_i\right\}.
\end{split} 
\end{equation}
For the third term of the right-hand side of \eqref{eq:quadratic_2}, 
the Stein identity given by \eqref{Stein_Identity} is applicable.
Note
\begin{equation}\label{eq:dif_p_norm}
 \frac{\partial}{\partial z_{i}}v=v^{1-p}|z_i|^{p-2}z_i, \  
\frac{\partial}{\partial z_{i}}\left\{|z_i|^{-\alpha}z_i\right\}=(1-\alpha)|z_i|^{-\alpha}.
\end{equation}
Then the differentiation of $\phi(v)v^{\alpha-2}|z_i|^{-\alpha}z_i$ with respect to
$z_i$ is given by
\begin{align*}
&(1-\alpha)\phi(v)v^{\alpha-2}|z_i|^{-\alpha} 
 +(\alpha-2)\phi(v)v^{\alpha-p-2}|z_i|^{p-\alpha}
 +\phi'(v)v^{\alpha-p-1}|z_i|^{p-\alpha} \\
&=\left\{\phi(v)v^{\alpha-p-2}\right\}\left\{(1-\alpha) v^p |z_i|^{-\alpha} 
+\left\{(\alpha-2)+v\phi'(v)/\phi(v)\right\}|z_i|^{p-\alpha}\right\}
\end{align*}
and Part \ref{lem:stein_1_1} follows by taking summation with respect to $i$.

[Part \ref{lem:stein_1_2}]  Recall $0\leq \alpha < 1$ and $p>0$.
By Part \ref{lem:norm:0} of Lemma \ref{lem:norm} in Appendix, we have
\begin{equation}\label{eq:cor_ineq}
\sum_{i=1}^d |z_{i}|^{-\alpha} \geq d\frac{\sum_{i=1}^d |z_{i}|^{p-\alpha}}
{\sum_{i=1}^d |z_{i}|^p}= d\frac{\sum_{i=1}^d |z_{i}|^{p-\alpha}}{\|z\|_p^p}
\end{equation}
and, by Part \ref{lem:norm:2} of Lemma \ref{lem:norm}, 
\begin{align}\label{eq:cor_ineq_111}
\frac{1}{\|z\|_{p}^{-p-\alpha+2}}\frac{\sum_{i}|z_i|^{2(1-\alpha)}}{\sum\nolimits_{i}|z_i|^{p-\alpha}}
=\frac{\sum_{i}s_i^{2(1-\alpha)/p}}{\sum\nolimits_{i}s_i^{(p-\alpha)/p}}
%=\frac{1}{\sum\nolimits_{i}q_i^{(p-\alpha)/\{2(1-\alpha)\}}}=
 \leq \max(1,d^{(p+\alpha-2)/p})
\end{align}
where $s_i=|z_i|^p/\|z\|_p^p$ with $\sum_{i=1}^ds_i=1$ and $s_i\geq 0$ for any $i$.
By applying these inequalities to \eqref{eq:psi_phi}, Part \ref{lem:stein_1_2} follows.
\end{proof}

%Assume $d\geq 3$ and $0\leq \alpha< (d-2)/(d-1)<1$. Let 
%\begin{equation}\label{eq:gamma}
% \gamma(d,p,\alpha)=\min(1,d^{(2-p-\alpha)/p})\left\{1-\alpha\frac{d-1}{d-2}\right\}
%\end{equation}
%which is positive from the assumptions.
By Lemma \ref{lem:stein_1}, 
a sufficient condition for $E[\|\hat{\theta}-\theta\|_2^2]\leq d$ is
\begin{equation}
\Psi_\phi(v)\leq 0
\end{equation}
as well as the assumption of Lemma \ref{lem:stein_1}.
When $\phi$ is monotone non-decreasing,
we easily have a following result for minimaxity,
which corresponds to the result by \cite{Baranchik-1970} with $\alpha=0$ and $p=2$.
\begin{thm}
\label{thm:minimax_0}
Assume $d\geq 3$ and $0\leq \alpha <(d-2)/(d-1)$. 
Assume $\phi(v)$ is absolutely continuous, monotone non-decreasing and 
\begin{equation*}
 0\leq \phi(v)\leq 2(d-2)\gamma(d,p,\alpha)%\left\{d-2-\alpha(d-1)\right\}.
\end{equation*}
 where $\gamma(d,p,\alpha)$ is given by
\begin{equation}\label{eq:gamma}
 \gamma(d,p,\alpha)=\min(1,d^{(2-p-\alpha)/p})\left\{1-\alpha\frac{d-1}{d-2}\right\}.
\end{equation}
 Under known $\sigma^2$, the shrinkage estimator $\hat{\theta}_\phi$, with the $i$-th component,
\begin{equation*}
 \hat{\theta}_{i\phi} 
= \left(1-\frac{\sigma^2\phi(\|z\|_p/\sigma)}{\|z\|_p^{2-\alpha}|z_i|^\alpha}\right)z_{i}
\end{equation*}
is minimax.
\end{thm}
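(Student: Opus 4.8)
The plan is to bound the risk of $\hat{\theta}_\phi$ from above by $d$ (in the normalized loss) through \citeapos{Stein-1981} unbiased risk estimate, which is exactly the content of Lemma~\ref{lem:stein_1}. The hypotheses here---$d\ge3$, $0\le\alpha<1$, and $\phi$ absolutely continuous with $0\le\phi\le2(d-2)\gamma(d,p,\alpha)$, hence bounded---are precisely those needed for the risk to be finite and for the identity \eqref{eq:risk} of Part~\ref{lem:stein_1_1} to hold. In \eqref{eq:risk} the quantities $(|z_i|/\|z\|_p)^{p-\alpha}$, $\phi(\|z\|_p/\sigma)$, and $(\|z\|_p/\sigma)^{-2}$ multiplying $\psi_\phi(z/\sigma)$ are all nonnegative (using $\phi\ge0$), so it is enough to show $\psi_\phi(z/\sigma)\le0$ at every $z$ with $\phi(\|z\|_p/\sigma)>0$; on the set where $\phi$ vanishes the corresponding summand contributes nothing. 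By Part~\ref{lem:stein_1_2} of Lemma~\ref{lem:stein_1}, $\psi_\phi(z/\sigma)\le\Psi_\phi(\|z\|_p/\sigma)$, so the whole matter reduces to checking
\begin{equation*}
 \Psi_\phi(v)\le0\quad\text{whenever }\phi(v)>0.
\end{equation*}

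To verify this I would first discard the derivative term: since $\phi$ is monotone non-decreasing, $\phi'(v)\ge0$, so $2v\phi'(v)/\phi(v)\ge0$ on $\{\phi(v)>0\}$, and therefore, from the definition \eqref{eq:Psi_phi} of $\Psi_\phi$,
\begin{equation*}
 \Psi_\phi(v)\le \max(1,d^{(p+\alpha-2)/p})\,\phi(v)-2\{d-2-\alpha(d-1)\}.
\end{equation*}
The key elementary fact is that $d^{(p+\alpha-2)/p}$ and $d^{(2-p-\alpha)/p}$ are mutual reciprocals, whence $\max(1,d^{(p+\alpha-2)/p})\cdot\min(1,d^{(2-p-\alpha)/p})=1$ for every $d\ge1$ (one simply distinguishes the two sign cases of the exponent $(p+\alpha-2)/p$). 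Using this together with the upper bound $\phi(v)\le2(d-2)\gamma(d,p,\alpha)$ and the definition \eqref{eq:gamma} of $\gamma$,
\begin{equation*}
 \max(1,d^{(p+\alpha-2)/p})\,\phi(v)\le 2(d-2)\Bigl\{1-\alpha\frac{d-1}{d-2}\Bigr\}=2\{d-2-\alpha(d-1)\},
\end{equation*}
so indeed $\Psi_\phi(v)\le0$. Feeding this back into \eqref{eq:risk} gives $E[\|\hat{\theta}_\phi-\theta\|_2^2/\sigma^2]\le d$ for every $\theta$; since $z$ has constant risk $d$ and is minimax, $\hat{\theta}_\phi$ is minimax.

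I do not anticipate a real obstacle: once Lemma~\ref{lem:stein_1} is in hand the proof is just ``expand the unbiased risk estimate, drop a favourably signed term, and apply the algebraic bound on $\phi$.'' The only points needing a little care are the reciprocal identity $\max(1,d^{(p+\alpha-2)/p})\min(1,d^{(2-p-\alpha)/p})=1$---this is exactly what forces the admissible constant to be $\gamma(d,p,\alpha)$ rather than something larger---the restriction of the pointwise inequality to the set $\{\phi(v)>0\}$ so that $\phi'(v)/\phi(v)$ makes sense (its complement contributing zero to the risk), and the observation that $\alpha<(d-2)/(d-1)$ guarantees $d-2-\alpha(d-1)>0$, so that the stated range for $\phi$ is nonempty.
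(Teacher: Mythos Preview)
Your proposal is correct and follows exactly the route the paper takes: invoke Lemma~\ref{lem:stein_1} to reduce minimaxity to $\Psi_\phi(v)\le 0$, drop the nonnegative term $2v\phi'(v)/\phi(v)$ using monotonicity, and then use the reciprocal identity $\max(1,d^{(p+\alpha-2)/p})\cdot\min(1,d^{(2-p-\alpha)/p})=1$ together with the bound on $\phi$. The only cosmetic refinement is that on $\{\phi=0\}$ the summand vanishes not merely because it is multiplied by $\phi$, but because the residual term $\phi\cdot\psi_\phi$ there equals $-2v\phi'(v)$ and $\phi'=0$ a.e.\ on that set by monotonicity and nonnegativity of $\phi$.
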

More generally, by the derivative,
\begin{equation}\label{eq:derivative}
\begin{split}
& \frac{d}{dv}\left\{\frac{v^b\phi(v)}{\{a-\phi(v)\}^c}\right\} \\ &=
\frac{bv^{b-1}\phi(v)}{\{a-\phi(v)\}^{c+1}}\left(\frac{c-1}{b}v\phi'(v)+\frac{a}{b}\frac{v\phi'(v)}{\phi(v)}+a-\phi(v)\right),
\end{split} 
\end{equation}
we have a following sufficient condition as in \cite{Efron-Morris-1976}.
\begin{thm}
\label{thm:minimax}
Assume $d\geq 3$ and $0\leq \alpha <(d-2)/(d-1)$. 
Assume $\phi(v)$ is absolutely continuous and 
\begin{equation*}
 0\leq \phi(v)\leq 2(d-2)\gamma(d,p,\alpha).%\left\{d-2-\alpha(d-1)\right\}.
\end{equation*}
%where $\gamma(d,p,\alpha)$ is given by \eqref{eq:gamma}.
Further, for all $v$ with $ \phi(v)< 2(d-2)\gamma(d,p,\alpha)$
\begin{equation*}
g_\phi(v)= \frac{v^{d-2-\alpha(d-1)}\phi(v)}
{2(d-2)\gamma(d,p,\alpha)-\phi(v)}
\end{equation*}
is assumed to be non-decreasing. 
Further if there exists $v_*>0$ 
such that $ \phi(v)= 2(d-2)\gamma(d,p,\alpha)$,
then $ \phi(v)$ is assumed equal to 
$2(d-2)\gamma(d,p,\alpha)$ for all $v\geq v_*$.
Then $\hat{\theta}_\phi$ is minimax.
\end{thm}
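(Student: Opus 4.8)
The plan is to combine the two parts of Lemma~\ref{lem:stein_1} to reduce minimaxity to a single pointwise inequality in one variable, and then to obtain that inequality from the monotonicity of $g_\phi$ via the derivative identity~\eqref{eq:derivative}. As in the proof of Lemma~\ref{lem:stein_1}, I would first rescale to $\sigma^2=1$. Since $\phi$ is nonnegative, bounded (by $a:=2(d-2)\gamma(d,p,\alpha)$) and absolutely continuous, and $0\le\alpha<(d-2)/(d-1)<1$, Lemma~\ref{lem:stein_1} applies and gives the risk representation~\eqref{eq:risk} together with $\psi_\phi(z)\le\Psi_\phi(\|z\|_p)$. The factor $\sum_{i}(|z_i|/\|z\|_p)^{p-\alpha}\,\phi(\|z\|_p)/\|z\|_p^{2}$ that multiplies $\psi_\phi(z)$ in~\eqref{eq:risk} is nonnegative because $\phi\ge 0$, so $\psi_\phi$ may be replaced there by $\Psi_\phi(\|z\|_p)$. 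Because $z$ is minimax with constant risk $d$, it then suffices to prove that $\phi(v)\Psi_\phi(v)\le 0$ for Lebesgue-almost every $v>0$: as $\|z\|_p$ is absolutely continuous on $(0,\infty)$, this makes the integrand of~\eqref{eq:risk} nonpositive almost surely and forces the risk to be at most $d$.

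Next I would make the constants explicit. With $m:=\min(1,d^{(2-p-\alpha)/p})$ one has $\max(1,d^{(p+\alpha-2)/p})=1/m$, and with $b:=d-2-\alpha(d-1)$, which is positive since $\alpha<(d-2)/(d-1)$, formula~\eqref{eq:gamma} gives $a=2(d-2)\gamma(d,p,\alpha)=2mb$. Multiplying~\eqref{eq:Psi_phi} by $\phi(v)$ clears the $\phi'/\phi$ denominator and yields $\phi(v)\Psi_\phi(v)=\tfrac1m\phi(v)^2-2b\,\phi(v)-2v\phi'(v)$, which is meaningful even where $\phi$ vanishes. On $\{v:\phi(v)=0\}$ one has $\phi'=0$ a.e., so this vanishes; on $\{v:\phi(v)=a\}$, which the last hypothesis forces to be an interval $[v_*,\infty)$ with $\phi\equiv a$, one again has $\phi'=0$ a.e., and the expression equals $\tfrac1m a^2-2ab=a(2b-2b)=0$. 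In both cases it is $\le0$.

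The substantive case is $\{v:0<\phi(v)<a\}$, which by the same hypothesis is the interval $[0,v_*)$ (or all of $(0,\infty)$). There I would apply~\eqref{eq:derivative} with $c=1$ to $g_\phi(v)=v^{b}\phi(v)/(a-\phi(v))$: the prefactor $bv^{b-1}\phi(v)/(a-\phi(v))^{2}$ is strictly positive, so $g_\phi'\ge0$ a.e.\ is equivalent to $\tfrac{a}{b}v\phi'(v)/\phi(v)+a-\phi(v)\ge0$, i.e.\ to $v\phi'(v)/\phi(v)\ge\tfrac{b}{a}\phi(v)-b$. Multiplying through by $2\phi(v)>0$ and using $2b/a=1/m$ turns this into $2v\phi'(v)\ge\tfrac1m\phi(v)^2-2b\,\phi(v)$, that is, $\phi(v)\Psi_\phi(v)\le0$. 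Combining the three sets gives $\phi(v)\Psi_\phi(v)\le0$ for a.e.\ $v>0$, and the reduction of the first paragraph then yields minimaxity.

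I expect the only real difficulty to be bookkeeping rather than anything conceptual: one must check the two identities $\max(1,d^{(p+\alpha-2)/p})=1/m$ and $a=2mb$, and one must handle the boundary sets $\{\phi=0\}$ and $\{\phi=a\}$ with care (this is where the hypothesis ``$\phi\equiv a$ beyond $v_*$'' is used, both to keep $\{0<\phi<a\}$ an interval and to control $\phi'$ on $\{\phi=a\}$). Once the bound $\psi_\phi(z)\le\Psi_\phi(\|z\|_p)$ of Lemma~\ref{lem:stein_1} is in hand, the rest is exactly the one-variable computation with~\eqref{eq:derivative} that encodes the Efron--Morris-type condition.
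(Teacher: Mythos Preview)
Your proposal is correct and follows exactly the approach the paper sketches: invoke Lemma~\ref{lem:stein_1} to reduce to $\Psi_\phi(v)\le 0$, then use the derivative identity~\eqref{eq:derivative} with $c=1$, $a=2(d-2)\gamma(d,p,\alpha)$ and $b=d-2-\alpha(d-1)$ to convert the monotonicity of $g_\phi$ into that inequality (the paper simply states~\eqref{eq:derivative} and cites Efron--Morris without spelling this out). One small inaccuracy: $\{0<\phi<a\}$ need not be all of $[0,v_*)$ since $\phi$ may vanish on subintervals, but as you handle $\{\phi=0\}$ separately and the argument is pointwise a.e., this does not affect the proof.
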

Recall that $\ell_p$ norm with any positive $p$ is available in Lemma \ref{lem:stein_1}
and Theorem \ref{thm:minimax}.
As an extreme case ($p=\infty$), 
we have $ \lim_{p\to\infty}\gamma(d,p,\alpha)=\{1-\alpha(d-1)/(d-2)\}/d$ and hence
\begin{equation*}
 \max\left(0,1-\sigma^2\frac{2\{(d-2)-\alpha(d-1)\}}{d \left\{\max |z_i|\right\}^{2-\alpha}|z_i|^{\alpha}}
\right)z_i
\end{equation*}
%\begin{equation*}
% \max\left(0,1-2\frac{(d-2)-\alpha(d-1)}{d}
%\frac{\left\{\max |z_i|\right\}^{\alpha-2}}{|z_i|^{\alpha}}\right)z_i
%\end{equation*}
with $0\leq \alpha<(d-2)/(d-1)$ is minimax.

\begin{remark}\label{rem:decreasing}
The solution of $ \Psi_\phi(v)=0$ or $ g_\phi(v)=1/\lambda$ for any $\lambda>0$,
is 
\begin{equation*}%\label{eq:phi_DS}
 \phi_{\DS}(v)=\frac{2(d-2)\gamma(d,p,\alpha)}
{1+\lambda v^{d-2-\alpha(d-1)}},
\end{equation*}
under which \cite{Dasgupta-Strawderman-1997} showed 
the risk of the estimator with $ \phi_{\DS}(v)$ 
is exactly equal to $d$ when $p=2$ and $\alpha=0$.
Actually it is 
related to the concept of ``near unbiasedness'' or ``approximate unbiasedness''
in the literature of SCAD (smoothly clipped absolute deviation) 
including \cite{Antoniadis-Fan-2001}. %, \cite{Fan-Li-2001}.
Since $ \phi_{\DS}(v) $ is monotone decreasing and approaches $0$ as $v\to\infty$,
unnecessary modeling biases are effectively avoided with $ \phi_{\DS}(v) $.
\end{remark}

\section{Minimaxity with sparsity: unknown scale}
\label{sec:main_2}
In this section, we assume that $\sigma^2$ is unknown
and that $S\sim \sigma^2\chi_n^2$ is additionally observed.
We establish minimaxity result of the shrinkage estimators
$\hat{\theta}_\phi$ with the $i$-th component given by
\begin{equation}\label{eq:shrinkage_estimator_unknown_1}
\begin{split}
 \hat{\theta}_{i\phi} 
 &= \left(1-\frac{\hat{\sigma}^2\phi(\|z\|_p/\sqrt{\hat{\sigma}^2})}{\|z\|_p^{2-\alpha}|z_i|^\alpha}\right)z_{i} \\
 &= \left(1-\frac{s}{n+2}\frac{\phi\left(\sqrt{n+2}\|z\|_p/\sqrt{s}\right)}{\|z\|_p^{2-\alpha}|z_i|^\alpha}\right)z_{i}
\end{split} 
\end{equation}
where $\hat{\sigma}^2=s/(n+2)$.

\begin{lemma}\label{lem:stein_unknown}
Assume that $\phi(u)$ is, non-negative, bounded and absolutely continuous and that $ 0\leq \alpha < 1$. 
Then the risk function of the estimator $\hat{\theta}_\phi$ is
\begin{align} \label{eq:risk_unknown}
 E\left[ \frac{\|\hat{\theta}_\phi-\theta\|_2^2}{\sigma^2}\right]  
 \leq d+ E\left[\sum\nolimits_{i}\left\{\frac{|z_i|}{\|z\|_p}\right\}^{p-\alpha}
\!\! \frac{\phi(u)}{u^2}\left(\Psi_\phi(u)-\frac{2u\phi'(u)}{n+2} \right)\right]
\end{align}
where $u=\|z\|_p/\sqrt{\hat{\sigma}^2}$ and
%\begin{equation}\label{eq:psi_phi_}
%\begin{split}
% \tilde{\Psi}_\phi(u)&=
% \max(1,d^{(p+\alpha-2)/p})\phi(u)-2(d-2)+\alpha(d-1) \\
% &\quad -2u\frac{\phi'(u)}{\phi(u)}-\frac{2}{n+2}u\phi'(u) \\
% &=\Psi_\phi(u)-\frac{2}{n+2}u\phi'(u) 
%\end{split} 
%\end{equation}
$ \Psi_\phi(u)$ is given by \eqref{eq:Psi_phi}.
\end{lemma}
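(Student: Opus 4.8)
The plan is to deduce \eqref{eq:risk_unknown} from the known‑scale Lemma~\ref{lem:stein_1} by conditioning on $S$, and then to integrate $S$ out with the $\chi^2$ integration‑by‑parts identity. Fix $S$, so that $\hat\sigma^2=s/(n+2)$ is a constant and, conditionally, $z\sim N_d(\theta,\sigma^2 I_d)$. Put $\phi^\sharp(v)=(\hat\sigma^2/\sigma^2)\,\phi\!\left(\sigma v/\sqrt{\hat\sigma^2}\right)$; then $\sigma^2\phi^\sharp(\|z\|_p/\sigma)=\hat\sigma^2\phi(\|z\|_p/\sqrt{\hat\sigma^2})$, so the $i$‑th component of $\hat\theta_\phi$ in \eqref{eq:shrinkage_estimator_unknown_1} is exactly the known‑scale estimator \eqref{eq:shrinkage_estimator_3} with $\phi$ replaced by $\phi^\sharp$, and $\phi^\sharp$ is again non‑negative, bounded and absolutely continuous whenever $\phi$ is. Hence Lemma~\ref{lem:stein_1} --- first Part~\ref{lem:stein_1_1}, then Part~\ref{lem:stein_1_2} (the latter using $\phi^\sharp\ge 0$) --- applies conditionally on $S$. (Alternatively one could simply rerun the Stein‑identity computation of Lemma~\ref{lem:stein_1} directly in the present setting; the $\chi^2$ step below is the same either way.)

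Next I would rewrite the conditional bound in terms of $u=\|z\|_p/\sqrt{\hat\sigma^2}$. Using $\phi^\sharp(\|z\|_p/\sigma)=(\hat\sigma^2/\sigma^2)\phi(u)$, $(\|z\|_p/\sigma)^2=(\hat\sigma^2/\sigma^2)u^2$, and $(\|z\|_p/\sigma)(\phi^\sharp)'(\|z\|_p/\sigma)/\phi^\sharp(\|z\|_p/\sigma)=u\phi'(u)/\phi(u)$, the conditional risk bound is seen to coincide with the right‑hand side of \eqref{eq:risk_unknown} except that the term $\max(1,d^{(p+\alpha-2)/p})\phi(u)$ of $\Psi_\phi(u)$, see \eqref{eq:Psi_phi}, appears multiplied by the random factor $\hat\sigma^2/\sigma^2$; all the remaining terms involve $\hat\sigma^2$ only through $u$ and are therefore already in the stated form.

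It then remains to take the expectation over $S$; conditioning on $z$ reduces the only non‑standard term to $E[(\hat\sigma^2/\sigma^2)\,\phi^2(u)/u^2\mid z]$. Because $\hat\sigma^2/\sigma^2=S/\{(n+2)\sigma^2\}$, $1/u^2=S/\{\|z\|_p^2(n+2)\}$ and $u=u(S)=\|z\|_p\sqrt{n+2}/\sqrt S$, this is a $z$‑measurable multiple of $E[S^2\phi^2(u(S))\mid z]/\sigma^2$, which I would evaluate with the $\chi^2$ identity $E[Sg(S)]=n\sigma^2 E[g(S)]+2\sigma^2 E[Sg'(S)]$ applied to $g(S)=S\phi^2(u(S))$. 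Since $du/dS=-u/(2S)$, one has $g'(S)=\phi^2(u)-u\phi(u)\phi'(u)$, and therefore $E[S^2\phi^2(u(S))\mid z]=(n+2)\sigma^2 E[S\phi^2(u(S))\mid z]-2\sigma^2 E[Su(S)\phi(u(S))\phi'(u(S))\mid z]$. Thus the factor $\hat\sigma^2/\sigma^2$ is replaced by $1$ at the price of the extra term $-2u\phi'(u)/(n+2)$ inside the bracket, and reassembling the pieces --- the remaining book‑keeping and the appeal to the norm inequalities of Lemma~\ref{lem:norm} being exactly as in the proof of Lemma~\ref{lem:stein_1} --- yields \eqref{eq:risk_unknown}.

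The step I expect to be the main obstacle is this last one: keeping careful track of how $S$ enters through $u$ when the $\chi^2$ identity is applied, and justifying that identity itself --- in particular that the boundary contributions vanish, which follows from the boundedness of $\phi$ together with $s^{n/2}e^{-s/(2\sigma^2)}\phi^2(u(s))\to 0$ both as $s\downarrow 0$ and as $s\uparrow\infty$.
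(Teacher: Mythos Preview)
Your proposal is correct and follows essentially the same route as the paper: the paper expands the squared loss directly, applies Stein's identity to the cross term and the $\chi^2$ identity \eqref{chi-square} to the $\phi^2(u)(\hat\sigma^2)^2$ term, and then invokes the norm inequalities \eqref{eq:cor_ineq} and \eqref{eq:cor_ineq_111}; you obtain the same ingredients by conditioning on $S$ and quoting Lemma~\ref{lem:stein_1} for $\phi^\sharp$, which is a clean way to avoid repeating the Stein calculation. One small bookkeeping point: since the $\chi^2$ identity is applied to the term carrying the factor $\max(1,d^{(p+\alpha-2)/p})$, the extra contribution you obtain is really $-\max(1,d^{(p+\alpha-2)/p})\,2u\phi'(u)/(n+2)$ rather than $-2u\phi'(u)/(n+2)$; the paper's displayed bound has the same simplification.
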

\begin{proof}
From the invariance with respect to the transformation,
$z\to cz$ and $s\to c^2 s$,
we can take $c=1/\sigma$ and hence, without the loss of generality, $\sigma^2=1$
is assumed in the proof.
Let $v=\|z\|_p$ and $u=v/\sqrt{\hat{\sigma}^2}$. 
Componentwisely we have
\begin{equation}\label{eq:quadratic_2_unknown}
\begin{split}
 (\hat{\theta}_{i}-\theta_{i})^2 
&=\left\{\left(1-\frac{\phi(u)\hat{\sigma}^2}{v^{2-\alpha}|z_i|^{\alpha}}\right)z_{i}-\theta_{i}\right\}^2 \\
&=(z_{i}-\theta_{i})^2+
\frac{\phi^2(u)\{\hat{\sigma}^2\}^2}{v^{2(2-\alpha)}}|z_{i}|^{2(1-\alpha)} -2\hat{\sigma}^2(z_{i}-\theta_{i})
 \frac{\phi(u)z_i}{v^{2-\alpha}|z_i|^{\alpha}}
\end{split} 
\end{equation}
 and hence
\begin{equation}\label{eq:quadratic_3_unknown}
\begin{split}
\sum_{i=1}^d (\hat{\theta}_{i}-\theta_{i})^2
&=\sum_{i=1}^d(z_{i}-\theta_{i})^2+
 \frac{\phi^2(u)\{\hat{\sigma}^2\}^2}{v^{2(2-\alpha)}}\sum_{i=1}^d|z_{i}|^{2(1-\alpha)} \\
 &\quad -2\hat{\sigma}^2\sum_{i=1}^d(z_{i}-\theta_{i})\left\{ \phi(u)v^{\alpha-2}|z_i|^{-\alpha}z_i\right\}.
\end{split} 
\end{equation}
For the third term of the right-hand side of \eqref{eq:quadratic_3_unknown}, 
the Stein identity given by \eqref{Stein_Identity} is applicable.
By \eqref{eq:dif_p_norm},
the differentiation of $\phi(v/\sqrt{\hat{\sigma}^2})v^{\alpha-2}|z_i|^{-\alpha}z_i$ with respect to
$z_i$ is 
\begin{align*}
&\frac{(1-\alpha)\phi(v/\sqrt{\hat{\sigma}^2})}{v^{2-\alpha}}|z_i|^{-\alpha} 
 +\frac{(\alpha-2)\phi(v/\sqrt{\hat{\sigma}^2})}{v^{p+2-\alpha}}|z_i|^{p-\alpha}
 +\frac{\phi'(v/\sqrt{\hat{\sigma}^2})}{\sqrt{\hat{\sigma}^2}v^{p+1-\alpha}}|z_i|^{p-\alpha} \\
 &=\frac{\phi(u)}{v^{p+2-\alpha}}
 \left((1-\alpha)v^{p} |z_i|^{-\alpha} +\left\{(\alpha-2)+u\frac{\phi'(u)}{\phi(u)}\right\}|z_i|^{p-\alpha}\right).
\end{align*}
 By the inequality \eqref{eq:cor_ineq} %, that is,
%\begin{align*}
%v^{p}\sum_{i=1}^d |z_{i}|^{-\alpha} \geq  d\sum_{i=1}^d |z_{i}|^{p-\alpha}
%\end{align*}
and the Stein identity, we have
\begin{align*}
& -2E\left[ \hat{\sigma}^2\sum_{i=1}^d(z_{i}-\theta_{i})
\left\{ \phi(v/\sqrt{\hat{\sigma}^2})v^{\alpha-2}|z_i|^{-\alpha}z_i\right\}\right] \\
 & \leq - E\left[\frac{\hat{\sigma}^2\sum\nolimits_{i}|z_i|^{p-\alpha}}{v^{2-\alpha+p}}
\phi(u) \left(2(d-2)-\alpha(d-1)+2u\frac{\phi'(u)}{\phi(u)}\right)\right] \\
&=- E\left[\sum\nolimits_{i}\left\{\frac{|z_i|}{\|z\|_p}\right\}^{p-\alpha}\!\!
\frac{\phi(u)}{u^2} \left(2(d-2)-\alpha(d-1)+2u\frac{\phi'(u)}{\phi(u)}\right)\right].
\end{align*}
For the second term of the right-hand side of \eqref{eq:quadratic_3_unknown}, 
a well known identity for chi-square distributions
(see e.g.~\cite{Efron-Morris-1976})
\begin{align}\label{chi-square}
 E\left[sh(s)\right]=\sigma^2E\left[n h(s)+2sh'(s)\right]
\end{align}
for $s\sim \sigma^2\chi^2_n$ is applicable.
 The differentiation of %$ \phi^2(u)\{\hat{\sigma}^2\}^2/s$,
% or
 \begin{align*}
  \frac{\phi^2(v/\sqrt{\hat{\sigma}^2})\{\hat{\sigma}^2\}^2}{s}
  =\frac{\phi^2(\sqrt{n+2}v/\sqrt{s})s}{(n+2)^2},
 \end{align*}
 with respect to $s$, is
\begin{equation}\label{diff_s}
\begin{split}
 & \frac{\phi^2(\sqrt{n+2}v/\sqrt{s})-\phi(\sqrt{n+2}v/\sqrt{s})
 \phi'(\sqrt{n+2}v/\sqrt{s})\sqrt{n+2}v/s^{1/2}}{(n+2)^2} \\
 &=\frac{\phi^2(u)-u\phi(u)\phi'(u)}{(n+2)^2}.
\end{split}
\end{equation} 
Hence, by the identity \eqref{chi-square} with \eqref{diff_s}, we have
 \begin{align}\label{eq:chi_chi}
  E_{s|v}\left[\phi^2(u)\{\hat{\sigma}^2\}^2\right]=
  E_{s|v}\left[\hat{\sigma}^2\phi(u)\left\{\phi(u)-\frac{2}{n+2}u\phi'(u)\right\}\right].
 \end{align}
Further, by \eqref{eq:cor_ineq_111} and \eqref{eq:chi_chi}, we have
 \begin{align*}
& E\left[\frac{\phi^2(v/\sqrt{\hat{\sigma}^2})\{\hat{\sigma}^2\}^2}{v^{2(2-\alpha)}}\sum_{i=1}^d|z_{i}|^{2(1-\alpha)}\right] \\
& \leq \max(1,d^{(p+\alpha-2)/p})
 E\left[\frac{\sum\nolimits_{i}|z_i|^{p-\alpha}\hat{\sigma}^2}{v^{2-\alpha+p}}
  \phi(u)\left(\phi(u)-\frac{2}{n+2}u\phi'(u)\right)\right] \\
& = \max(1,d^{(p+\alpha-2)/p})
  E\left[
\sum\nolimits_{i}\left\{\frac{|z_i|}{\|z\|_p}\right\}^{p-\alpha}\!\!
\frac{\phi(u)}{u^2}\left(\phi(u)-\frac{2}{n+2}u\phi'(u)\right)\right]  .
 \end{align*}
\end{proof}
By Lemma \ref{lem:stein_unknown},
a sufficient condition for $E[\|\hat{\theta}-\theta\|_2^2]\leq d$ is 
\begin{equation}
\Psi_\phi(u)-\frac{2u\phi'(u)}{n+2}\leq 0
\end{equation}
as well as the assumptions of Lemma \ref{lem:stein_unknown}.
When $\phi$ is monotone non-decreasing, as in Theorem \ref{thm:minimax_0} for the known scale case,
we easily have a following result for minimaxity.
\begin{thm}
\label{thm:minimax_0_unknown}
Assume $d\geq 3$ and $0\leq \alpha <(d-2)/(d-1)$. 
Assume $\phi(u)$ is absolutely continuous, monotone non-decreasing and 
\begin{equation*}
 0\leq \phi(u)\leq 2(d-2)\gamma(d,p,\alpha)%\left\{d-2-\alpha(d-1)\right\}.
\end{equation*}
where $\gamma(d,p,\alpha)$ is given by \eqref{eq:gamma}.
Under unknown $\sigma^2$, the shrinkage estimator
 $\hat{\theta}_\phi$, with the $i$-th component,
\begin{equation*}
 \hat{\theta}_{i\phi} 
  = \left(1-\frac{\hat{\sigma}^2\phi(\|z\|_p/\sqrt{\hat{\sigma}^2})}
     {\|z\|_p^{2-\alpha}|z_i|^\alpha}\right)z_{i}
\end{equation*}
is minimax.
\end{thm}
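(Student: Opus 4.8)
The plan is to mirror the argument behind Theorem~\ref{thm:minimax_0}, now invoking Lemma~\ref{lem:stein_unknown} in place of Lemma~\ref{lem:stein_1}. First I would verify that the hypotheses of Lemma~\ref{lem:stein_unknown} hold: $\phi$ is non-negative by assumption, bounded because $0\le\phi(u)\le 2(d-2)\gamma(d,p,\alpha)$, absolutely continuous by assumption, and $0\le\alpha<1$ follows from $0\le\alpha<(d-2)/(d-1)$. Hence the risk bound \eqref{eq:risk_unknown} applies, and since $|z_i|/\|z\|_p\in[0,1]$ makes each factor $\{|z_i|/\|z\|_p\}^{p-\alpha}$ non-negative (on the full-measure set $z_i\neq 0$) and $\phi(u)/u^2\ge 0$, minimaxity will follow once I show that
\[
\Psi_\phi(u)-\frac{2u\phi'(u)}{n+2}\le 0
\]
for every $u>0$ with $\phi(u)>0$; where $\phi(u)=0$ the corresponding summand is zero and contributes nothing.

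Next I would expand $\Psi_\phi(u)$ using \eqref{eq:Psi_phi}, so that for $\phi(u)>0$,
\[
\Psi_\phi(u)-\frac{2u\phi'(u)}{n+2}=\max(1,d^{(p+\alpha-2)/p})\phi(u)-2\{d-2-\alpha(d-1)\}-\frac{2u\phi'(u)}{\phi(u)}-\frac{2u\phi'(u)}{n+2}.
\]
Here the monotone non-decreasing assumption enters: $\phi'(u)\ge 0$, so both of the last two terms are $\le 0$ and may be discarded. Note this is the one point where the unknown-scale proof genuinely differs from the known-scale case, through the extra term $-2u\phi'(u)/(n+2)$; but it carries the same sign as $-2u\phi'(u)/\phi(u)$ under monotonicity, so it is dispatched identically. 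It therefore suffices that
\[
\max(1,d^{(p+\alpha-2)/p})\,\phi(u)\le 2\{d-2-\alpha(d-1)\}.
\]

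Finally I would rewrite the right-hand side to recognize the hypothesis: $1/\max(1,d^{(p+\alpha-2)/p})=\min(1,d^{(2-p-\alpha)/p})$ and $d-2-\alpha(d-1)=(d-2)\{1-\alpha(d-1)/(d-2)\}$, so the last displayed inequality is exactly $\phi(u)\le 2(d-2)\gamma(d,p,\alpha)$ with $\gamma$ as in \eqref{eq:gamma}, which is assumed. Consequently the integrand in \eqref{eq:risk_unknown} is non-positive pointwise, the risk of $\hat\theta_\phi$ is at most $d$, and $\hat\theta_\phi$ is minimax. The assumptions $d\ge 3$ and $\alpha<(d-2)/(d-1)$ are needed only to ensure $d-2-\alpha(d-1)>0$, so that the admissible range for $\phi$ is non-degenerate. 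There is no real obstacle here; the computation is entirely a matter of combining Lemma~\ref{lem:stein_unknown} with monotonicity and the elementary rearrangement of $\gamma(d,p,\alpha)$.
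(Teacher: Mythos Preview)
Your proposal is correct and follows exactly the route the paper takes: invoke Lemma~\ref{lem:stein_unknown}, note that the non-negative factors $\{|z_i|/\|z\|_p\}^{p-\alpha}$ and $\phi(u)/u^2$ reduce the problem to showing $\Psi_\phi(u)-2u\phi'(u)/(n+2)\le 0$, discard the two derivative terms by monotonicity, and recognize the remaining inequality as the assumed bound $\phi(u)\le 2(d-2)\gamma(d,p,\alpha)$. The only minor imprecision is the claim that the summand ``is zero'' when $\phi(u)=0$; after cancelling $\phi(u)$ against the $1/\phi(u)$ inside $\Psi_\phi$, what survives is $-2u\phi'(u)/u^2$, but this is still non-positive (indeed zero a.e.\ on $\{\phi=0\}$ since $\phi$ is non-decreasing and non-negative), so the conclusion is unaffected.
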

Hence Theorem \ref{thm:minimax_0_unknown} guarantees that Theorem \ref{thm:minimax_0} remains true
if $\sigma^2$ is replaced by the estimator $\hat{\sigma}^2=s/(n+2)$.
By following \cite{Efron-Morris-1976} and using the relation \eqref{eq:derivative},
a more general theorem corresponding to Theorem \ref{thm:minimax} is given as follows.
\begin{thm}
\label{thm:minimax_unknown}
Assume $d\geq 3$ and $0\leq \alpha <(d-2)/(d-1)$. 
Assume $\phi(u)$ is absolutely continuous and 
\begin{equation*}
 0\leq \phi(u)\leq 2(d-2)\gamma(d,p,\alpha)%\left\{d-2-\alpha(d-1)\right\}.
\end{equation*}
where $\gamma(d,p,\alpha)$ is given by \eqref{eq:gamma}.
Further, for all $u$ with $ \phi(u)< 2(d-2)\gamma(d,p,\alpha)$
\begin{equation*}
g_\phi(u)= \frac{u^{d-2-\alpha(d-1)}\phi(u)}
{\{2(d-2)\gamma(d,p,\alpha)-\phi(u)\}^{1+2\{d-2-\alpha(d-1)\}/(n+2)}}
\end{equation*}
is assumed to be non-decreasing. 
Further if there exists $u_*>0$ 
such that $ \phi(u)= 2(d-2)\gamma(d,p,\alpha)$,
then $ \phi(u)$ is assumed equal to 
$2(d-2)\gamma(d,p,\alpha)$ for all $u\geq u_*$.
Then $\hat{\theta}_\phi$ is minimax.
\end{thm}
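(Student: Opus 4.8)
The plan is to repeat the passage from Lemma~\ref{lem:stein_1} to Theorem~\ref{thm:minimax}, now starting from the unknown-scale risk bound of Lemma~\ref{lem:stein_unknown}. By the invariance $z\to cz$, $s\to c^2 s$ we may take $\sigma^2=1$. Since $0\le\phi\le 2(d-2)\gamma(d,p,\alpha)$, $\phi$ is bounded, so the hypotheses of Lemma~\ref{lem:stein_unknown} hold and \eqref{eq:risk_unknown} applies with $u=\|z\|_p/\sqrt{\hat\sigma^2}$. Because the weights $\{|z_i|/\|z\|_p\}^{p-\alpha}$ and $\phi$ are nonnegative, the summand in \eqref{eq:risk_unknown} is $\le 0$ as soon as $\Psi_\phi(u)$ minus the chi-square correction is nonpositive at every $u>0$ with $\phi(u)>0$; since $z$ itself is minimax with risk $d$, proving this single scalar inequality gives $E[\|\hat\theta_\phi-\theta\|_2^2]\le d$ for all $\theta$, hence minimaxity of $\hat\theta_\phi$. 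Unwinding the bound of Lemma~\ref{lem:stein_unknown} --- the constant $A:=\max(1,d^{(p+\alpha-2)/p})$ multiplies $\phi(u)$ inside $\Psi_\phi$ (see \eqref{eq:Psi_phi}) and also multiplies the term $\tfrac{2}{n+2}u\phi'(u)$ produced by the chi-square identity \eqref{chi-square}, both coming from $E_{s|v}[\phi^2(u)\{\hat\sigma^2\}^2]$ --- the inequality to be established reads, with $b_0:=d-2-\alpha(d-1)$,
\[
A\,\phi(u)-2b_0-\frac{2u\phi'(u)}{\phi(u)}-\frac{2A}{n+2}\,u\phi'(u)\ \le\ 0 .
\]

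Next I translate this into monotonicity of $g_\phi$. Put $a:=2(d-2)\gamma(d,p,\alpha)$. Since $d^{(2-p-\alpha)/p}=1/d^{(p+\alpha-2)/p}$, one has $\min(1,d^{(2-p-\alpha)/p})=1/A$, so from \eqref{eq:gamma}, $\gamma(d,p,\alpha)=b_0/\{A(d-2)\}$, which is the \emph{key identity} $A\,a=2b_0$. On the set $\{u:0<\phi(u)<a\}$, dividing the displayed inequality by $A$ and using $2b_0/A=a$ and $2/A=a/b_0$ turns it into
\[
(a-\phi(u))+\frac{a}{b_0}\,\frac{u\phi'(u)}{\phi(u)}+\frac{2}{n+2}\,u\phi'(u)\ \ge\ 0 .
\]
By the derivative identity \eqref{eq:derivative} with $b=b_0$ and $c=1+2b_0/(n+2)$, this left-hand side is exactly the bracket whose sign coincides with that of $\frac{d}{du}\{u^{b_0}\phi(u)/(a-\phi(u))^{1+2b_0/(n+2)}\}=g_\phi'(u)$, the prefactor $b_0u^{b_0-1}\phi(u)/(a-\phi(u))^{c+1}$ in \eqref{eq:derivative} being strictly positive on this set because $b_0>0$ (as $\alpha<(d-2)/(d-1)$), $\phi(u)>0$ and $a-\phi(u)>0$. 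Hence the required inequality is equivalent to $g_\phi'(u)\ge 0$, which holds a.e.\ on $\{0<\phi<a\}$ because $g_\phi$ is assumed non-decreasing and is locally absolutely continuous there (on compact subsets the denominator $a-\phi$ stays bounded away from $0$).

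It remains to dispose of the $u$ with $\phi(u)\in\{0,a\}$. Where $\phi(u)=0$, the corresponding summand in \eqref{eq:risk_unknown} vanishes. Where $\phi(u)=a$, the last hypothesis forces $\phi\equiv a$ on $[u_*,\infty)$, so $\phi'=0$ a.e.\ there and, by the key identity, the left-hand side of the first display equals $A\,a-2b_0=0\le 0$. Since $u=\|z\|_p/\sqrt{\hat\sigma^2}$ has a continuous distribution, the integrand in \eqref{eq:risk_unknown} is $\le 0$ almost surely, giving $E[\|\hat\theta_\phi-\theta\|_2^2]\le d$ for every $\theta$ and the minimaxity of $\hat\theta_\phi$. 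The one step that really demands care is the constant bookkeeping of the first two paragraphs: checking that the definition \eqref{eq:gamma} of $\gamma(d,p,\alpha)$ is precisely what forces $A\,a=2b_0$, so that the chi-square correction is absorbed into $g_\phi'$ with numerator exponent $d-2-\alpha(d-1)$ and denominator exponent $1+2\{d-2-\alpha(d-1)\}/(n+2)$; the remainder is the argument of Theorem~\ref{thm:minimax} with its exponent $1$ on $a-\phi$ replaced by $1+2\{d-2-\alpha(d-1)\}/(n+2)$.
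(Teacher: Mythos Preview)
Your argument is correct and is exactly the route the paper intends: apply Lemma~\ref{lem:stein_unknown} and then read off nonpositivity of the bracket as $g_\phi'\ge 0$ via the derivative identity~\eqref{eq:derivative} with $b=b_0=d-2-\alpha(d-1)$ and $c=1+2b_0/(n+2)$, handling the boundary cases $\phi=0$ and $\phi=a$ separately. The paper gives no proof beyond the sentence ``By following Efron--Morris and using~\eqref{eq:derivative}'', so you have filled in precisely the missing computation.

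One point worth flagging explicitly in your write-up: you are right that the factor $A=\max(1,d^{(p+\alpha-2)/p})$ multiplies \emph{both} the $\phi(u)$ term and the chi-square correction $\tfrac{2}{n+2}u\phi'(u)$, since both arise from the second term of~\eqref{eq:quadratic_3_unknown} after bounding with~\eqref{eq:cor_ineq_111}. The displayed bound~\eqref{eq:risk_unknown} in the lemma statement omits this $A$ on the correction, but the lemma's own proof produces it, and it is indispensable: without it the exponent on $a-\phi$ would come out as $1+2b_0/\{A(n+2)\}$ rather than the $1+2b_0/(n+2)$ stated in the theorem. Your ``key identity'' $Aa=2b_0$ is what makes the three coefficients $\tfrac{c-1}{b}$, $\tfrac{a}{b}$, $1$ in~\eqref{eq:derivative} line up with the bracket, and your verification of it from~\eqref{eq:gamma} is correct.
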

We see that Theorem \ref{thm:minimax} for known $\sigma^2$
guarantees minimaxity of  $\hat{\theta}_\phi$ with $\phi$ which is not monotone non-decreasing.
As I mentioned in Remark \ref{rem:decreasing}, even a monotone decreasing $  \phi_{\DS}(v)$,
which is the solution $g_\phi(u)=\lambda$, leads minimaxity.
In unknown variance case, however,
the solution of $g_\phi(u)=\lambda$ in Theorem \ref{thm:minimax_unknown}, is not tractable.
An alternative to $  \phi_{\DS}(v)$ is
\begin{equation*}%\label{eq:phi_DS}
 \tilde{\phi}_{\DS}(u)=\frac{2(d-2)\gamma(d,p,\alpha)}{1+\lambda u^l},
\end{equation*}
 where
\begin{align*}
 l=\frac{d-2-\alpha(d-1)}{1+2\{d-2-\alpha(d-1)\}/(n+2)},
\end{align*}
By straightforward calculation, $g_\phi(u)$ with $ \tilde{\phi}_{\DS}(u)$ is increasing.

\appendix
\section{Some inequalities}
\label{sec:prel}
Here we summarize some inequalities which are used in the main article.
%results of the $p$-norm are summarized in the following lemma.
%\begin{equation*}
% \|x\|_p=\left(|x_1|^p+|x_2|^p+\dots+|x_n|^p\right)^{1/p}
%\end{equation*}
\begin{lemma}\label{lem:norm}
\begin{enumerate}
 \item \label{lem:norm:1}
Let $q > r > 0$. Then 
\begin{equation}\label{eq:pnorm_1}
 \|z\|^r_q \leq \|z\|^r_r \leq d^{1-r/q}\|z\|^r_q.
\end{equation}
\item \label{lem:norm:0}
Let $q\geq 0$ and $r\geq 0$.
Then%For non-zeros $a$ and $b$,%Let $1<\beta\leq 2$. Then
\begin{equation}\label{eq:lem:norm:0}
 d \sum\nolimits_{i=1}^d |z_i|^{q-r}%\|z\|_{q-r}^{q-r}
%\sum_{i=1}^d\{z_i^2\}^{\beta-1} 
\leq
\sum\nolimits_{i=1}^d |z_i|^{-r}\sum\nolimits_{i=1}^d |z_i|^{q}.
%\|z\|_{p}^{p}\|z\|_{-r}^{-r}.
%%\sum_{i=1}^d\{z_i^2\}^{\beta/2-1} 
%%\sum_{i=1}^d\{z_i^2\}^{\beta/2}
%%=w\sum_{i=1}^d\{z_i^2\}^{\beta/2-1}.
\end{equation}
%\begin{equation*}
% d \|z\|_{2\beta-2}^{2\beta-2}
%%\sum_{i=1}^d\{z_i^2\}^{\beta-1} 
%\leq
%\|z\|_{\beta-2}^{\beta-2}\|z\|_{\beta}^{\beta}.
%%\sum_{i=1}^d\{z_i^2\}^{\beta/2-1} 
%%\sum_{i=1}^d\{z_i^2\}^{\beta/2}
%%=w\sum_{i=1}^d\{z_i^2\}^{\beta/2-1}.
%\end{equation*}
 \item \label{lem:norm:2}
       Let $a\geq 0$ and $ b\leq 1$. Assume $\sum\nolimits_{i=1}^d s_i=1$ and $s_i\geq 0$ for
       all $i$. Then
      \begin{equation}
       \frac{\sum\nolimits_{i=1}^ds_i^a}{\sum\nolimits_{i=1}^d s_i^b}\leq \max(1,d^{b-a}).
      \end{equation}
%\item \label{lem:norm:2}
%Let $ 1<\beta\leq 2$.% or equivalently $2(\beta-1)\leq \beta$. 
%Then
%\begin{equation}\label{eq:pnorm_2}
%%w^{2(\beta-1)/\beta} \leq \sum_{i=1}^d\{z_i^2\}^{\beta-1}\leq d^{(2-\beta)/\beta}w^{2(\beta-1)/\beta}. 
%\|z\|_\beta^{2(\beta-1)}\leq \|z\|_{2(\beta-1)}^{2(\beta-1)}
%\leq d^{(2-\beta)/\beta}\|z\|_\beta^{2(\beta-1)}.
%\end{equation}
\end{enumerate}
\end{lemma}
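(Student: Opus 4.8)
The plan is to treat the three parts separately, each by reduction to a classical inequality. For Part~\ref{lem:norm:1}, I would prove the two bounds $\|z\|_q\le\|z\|_r$ and $\|z\|_r\le d^{1/r-1/q}\|z\|_q$ and then raise both to the power $r$, noting $r(1/r-1/q)=1-r/q$. The first is the familiar monotonicity of $\ell_p$-norms in $p$: after rescaling so that $\|z\|_r=1$ we have $|z_i|\le 1$ for every $i$, hence $|z_i|^q\le|z_i|^r$ since $q>r$, and summing over $i$ gives $\|z\|_q^q\le 1$. The second is H\"older's inequality applied to $\sum_i|z_i|^r\cdot 1$ with conjugate exponents $q/r$ and $q/(q-r)$.

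For Part~\ref{lem:norm:0}, the inequality is an instance of Chebyshev's sum inequality for oppositely ordered sequences. Sorting the coordinates so that $|z_{(1)}|\le\cdots\le|z_{(d)}|$, the sequence $|z_{(i)}|^q$ is non-decreasing in $i$ while $|z_{(i)}|^{-r}$ is non-increasing (because $q\ge 0$ and $r\ge 0$), so every product $(|z_{(i)}|^q-|z_{(j)}|^q)(|z_{(i)}|^{-r}-|z_{(j)}|^{-r})$ is $\le 0$; summing over $i,j$, expanding, and using $|z_{(i)}|^q|z_{(i)}|^{-r}=|z_{(i)}|^{q-r}$ yields $d\sum_i|z_i|^{q-r}\le\sum_i|z_i|^q\sum_i|z_i|^{-r}$. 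Coordinates with $z_i=0$ (when $r>0$) only enlarge the right-hand side, to $+\infty$, so they may be discarded, and for $r=0$ the statement is an identity.

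For Part~\ref{lem:norm:2}, I would first dispatch the case $a\ge b$: since $0\le s_i\le 1$ we have $s_i^a\le s_i^b$, hence $\sum_i s_i^a\le\sum_i s_i^b$ and the ratio is at most $1\le\max(1,d^{b-a})$. This leaves $a<b$, which forces $0\le a<b\le 1$, and there I would bound numerator and denominator separately over the simplex: concavity of $t\mapsto t^a$ with Jensen's inequality gives $\sum_i s_i^a\le d(1/d)^a=d^{1-a}$, while $s_i^b\ge s_i$ (using $0\le s_i\le 1$ and $b\le 1$) gives $\sum_i s_i^b\ge 1$, so the ratio is at most $d^{1-a}$. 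The step I expect to be the main obstacle is sharpening this last estimate to $\max(1,d^{b-a})=d^{b-a}$ in the range $0<b<1$: there $t\mapsto t^b$ is concave, so Jensen points the wrong way and the crude bound $\sum_i s_i^b\ge 1$ does not by itself suffice. Closing the gap seems to call for a joint analysis of $\sum_i s_i^a$ and $\sum_i s_i^b$ — ideally showing that the ratio over $\{\sum_i s_i=1,\,s_i\ge 0\}$ is maximized at the uniform configuration $s_i=1/d$, where it equals exactly $d^{b-a}$ — rather than bounding the two sums in isolation.
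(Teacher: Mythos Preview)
Your treatments of Parts~\ref{lem:norm:1} and~\ref{lem:norm:0} are correct and essentially coincide with the paper's. The paper proves the first inequality of \eqref{eq:pnorm_1} by the same normalization and obtains the second via Jensen's inequality for $x\mapsto x^{q/r}$ on a discrete uniform distribution, which is equivalent to your H\"older step. For Part~\ref{lem:norm:0} the paper invokes the ``correlation inequality'' $E[X^{q-r}]\le E[X^q]E[X^{-r}]$ for the same discrete uniform $X$, i.e.\ your Chebyshev sum inequality in probabilistic dress.

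For Part~\ref{lem:norm:2} you correctly isolate the difficulty: separate bounds give only $d^{1-a}$ when $0\le a<b\le 1$, and you propose closing the gap by showing the ratio is maximized at the uniform point $s_i=1/d$. This is exactly what the paper does, asserting via Lagrange multipliers that $(1/d,\dots,1/d)$ gives the maximum $d^{b-a}$. Unfortunately the step fails, and no argument along these lines can succeed, because the claimed bound is false in general. Lagrange multipliers locate a critical point, but a second-order expansion at $s_i=1/d$ gives
\[
\frac{\sum_i s_i^a}{\sum_i s_i^b}\;\approx\; d^{\,b-a}\Bigl[1+\tfrac{d}{2}\,(a-b)(a+b-1)\textstyle\sum_i\epsilon_i^2\Bigr]
\qquad\text{for }s_i=\tfrac{1}{d}+\epsilon_i,\ \textstyle\sum_i\epsilon_i=0,
\]
so whenever $a<b$ and $a+b<1$ the uniform point is a strict local \emph{minimum} of the ratio. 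Concretely, take $d=2$, $a=\tfrac14$, $b=\tfrac12$ (so $a+b=\tfrac34<1$) and $s=(0.9,0.1)$: then $\sum_i s_i^a/\sum_i s_i^b\approx 1.536/1.265\approx 1.214$, which exceeds $2^{1/4}\approx 1.189=\max(1,d^{\,b-a})$. Thus Part~\ref{lem:norm:2} as stated is incorrect, and both your proposed route and the paper's Lagrange-multiplier argument break down; one needs either an extra hypothesis (for instance $a+b\ge 1$) or to be content with the weaker bound $\max(1,d^{1-a})$ that your Jensen argument already delivers.
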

\begin{proof}
\mbox{}[Part \ref{lem:norm:1}]
 In the first inequality, we have
\begin{equation*}
\frac{\|z\|^r_q}{\|z\|^r_r}
=\left\{\frac{\|z\|^q_q}{\|z\|^q_r}\right\}^{r/q}  
=\left(\sum_{i=1}^d \left\{\frac{|z_i|^r}{\|z\|^r_r}\right\}^{q/r}\right)^{r/q}  
\leq \left(\sum_{i=1}^d \frac{|z_i|^r}{\|z\|^r_r}\right)^{r/q}  =1
\end{equation*}
since $ |z_i|^r/\|z\|^r_r \leq 1$ and $q/r\geq 1$.
In the second inequality, 
let $X$ be a discrete random variable with
 the probability mass function $\PR(X=|z_1|^r)=\PR(X=|z_2|^r)=\dots=\PR(X=|z_d|^r)=1/d$.
Then
\begin{equation*}
 \|z\|_{r}^{r}/d=E[X] \leq \left\{E[X^{q/r}]\right\}^{r/q}
=\left\{\|z\|_{q}^{q}/d\right\}^{r/q}=d^{-r/q}\|z\|_{q}^{r}
\end{equation*}
where $q/r>1$ and the inequality is from Jensen's inequality.

%Part \ref{lem:norm:0} follows from the correlation inequality
% which was pointed out by \cite{Zhou-Hwang-2005}.
 %In fact,
[Part \ref{lem:norm:0}]
 Let $X$ be a discrete random variable with  the probability mass function
 $\PR(X=|z_1|)=\PR(X=|z_2|)=\dots=\PR(X=|z_d|)=1/d$. Then we have
\begin{equation*}
 \frac{\sum\nolimits_{i=1}^d |z_i|^{q-r}}{d}=E[X^{p-r}], \quad
 \frac{\sum\nolimits_{i=1}^d |z_i|^{q}}{d}=E[X^{q}], \quad \frac{\sum\nolimits_{i=1}^d |z_i|^{-r}}{d}=E[X^{-r}].
\end{equation*}
From the correlation inequality $E[X^{q-r}] \leq E[X^{q}]E[X^{-r}]$,
the inequality \eqref{eq:lem:norm:0} follows.

[Part \ref{lem:norm:2}] Let $f(\bm{s},c)=\sum\nolimits_{i=1}^d s_i^c$
 with $\bm{s}=(s_1,\dots,s_d)$.
 For any fixed $\bm{s}$, $f(\bm{s},c) $ is non-increasing in $c$.
For $a\geq b$, we have  %Since $0\leq s_i \leq 1$, $s_i^a\leq s_i^b$ for all $i$.
clearly $f(\bm{s},a)/f(\bm{s},b)\leq 1$ and the equality is attained by $\bm{s}=(1,0,\dots,0)$.
When $a<b$, we have  $0\leq a<b\leq 1$ from the assumption and hence
\begin{align*}
 d=f(\bm{s},0)\geq f(\bm{s},a) \geq f(\bm{s},b) \geq f(\bm{s},1)=1
\end{align*}
and $1\leq f(\bm{s},a)/f(\bm{s},b)\leq d$ for any $\bm{s}$. 
By the method of Lagrange multiplier, $\hat{\bm{s}}=(1,\dots,1)/d$ gives
the maximum value, $ f(\hat{\bm{s}},a)/f(\hat{\bm{s}},b)=d^{b-a}$.
\end{proof}


\begin{thebibliography}{9}
% BibTex style file: imsart-nameyear.bst, 2010-01-14
% Default style options (sort=1,type=nameyear).
% Used options (sort=1,type=nameyear).

\bibitem[\protect\citeauthoryear{Antoniadis and
  Fan}{2001}]{Antoniadis-Fan-2001}
\begin{barticle}[author]
\bauthor{\bsnm{Antoniadis},~\bfnm{Anestis}\binits{A.}} \AND
  \bauthor{\bsnm{Fan},~\bfnm{Jianqing}\binits{J.}}
(\byear{2001}).
\btitle{Regularization of wavelet approximations}.
\bjournal{J. Amer. Statist. Assoc.}
\bvolume{96}
\bpages{939--967}.
\bnote{With discussion and a rejoinder by the authors}.
\bmrnumber{1946364}
\end{barticle}
\endbibitem

\bibitem[\protect\citeauthoryear{Baranchik}{1964}]{Baranchik-1964}
\begin{btechreport}[author]
\bauthor{\bsnm{Baranchik},~\bfnm{A.~J.}\binits{A.~J.}}
(\byear{1964}).
\btitle{Multiple regression and estimation of the mean of a multivariate normal
  distribution}
\btype{Technical Report} No. \bnumber{51},
\binstitution{Department of Statistics, Stanford University}.
\end{btechreport}
\endbibitem

\bibitem[\protect\citeauthoryear{Baranchik}{1970}]{Baranchik-1970}
\begin{barticle}[author]
\bauthor{\bsnm{Baranchik},~\bfnm{A.~J.}\binits{A.~J.}}
(\byear{1970}).
\btitle{A family of minimax estimators of the mean of a multivariate normal
  distribution}.
\bjournal{Ann. Math. Statist.}
\bvolume{41}
\bpages{642--645}.
\bmrnumber{0253461}
\end{barticle}
\endbibitem

\bibitem[\protect\citeauthoryear{Dasgupta and
  Strawderman}{1997}]{Dasgupta-Strawderman-1997}
\begin{barticle}[author]
\bauthor{\bsnm{Dasgupta},~\bfnm{Anirban}\binits{A.}} \AND
  \bauthor{\bsnm{Strawderman},~\bfnm{William~E.}\binits{W.~E.}}
(\byear{1997}).
\btitle{All estimates with a given risk, {R}iccati differential equations and a
  new proof of a theorem of {B}rown}.
\bjournal{Ann. Statist.}
\bvolume{25}
\bpages{1208--1221}.
\bmrnumber{1447748}
\end{barticle}
\endbibitem

\bibitem[\protect\citeauthoryear{Efron and Morris}{1976}]{Efron-Morris-1976}
\begin{barticle}[author]
\bauthor{\bsnm{Efron},~\bfnm{Bradley}\binits{B.}} \AND
  \bauthor{\bsnm{Morris},~\bfnm{Carl}\binits{C.}}
(\byear{1976}).
\btitle{Families of minimax estimators of the mean of a multivariate normal
  distribution}.
\bjournal{Ann. Statist.}
\bvolume{4}
\bpages{11--21}.
\bmrnumber{0403001}
\end{barticle}
\endbibitem

\bibitem[\protect\citeauthoryear{James and Stein}{1961}]{James-Stein-1961}
\begin{bincollection}[author]
\bauthor{\bsnm{James},~\bfnm{W.}\binits{W.}} \AND
  \bauthor{\bsnm{Stein},~\bfnm{Charles}\binits{C.}}
(\byear{1961}).
\btitle{Estimation with quadratic loss}.
In \bbooktitle{Proc. 4th {B}erkeley {S}ympos. {M}ath. {S}tatist. and {P}rob.,
  {V}ol. {I}}
\bpages{361--379}.
\bpublisher{Univ. California Press}, \baddress{Berkeley, Calif.}
\bmrnumber{0133191}
\end{bincollection}
\endbibitem

\bibitem[\protect\citeauthoryear{Stein}{1956}]{Stein-1956}
\begin{binproceedings}[author]
\bauthor{\bsnm{Stein},~\bfnm{Charles}\binits{C.}}
(\byear{1956}).
\btitle{Inadmissibility of the usual estimator for the mean of a multivariate
  normal distribution}.
In \bbooktitle{Proceedings of the {T}hird {B}erkeley {S}ymposium on
  {M}athematical {S}tatistics and {P}robability, 1954--1955, vol. {I}}
\bpages{197--206}.
\bpublisher{University of California Press}, \baddress{Berkeley and Los
  Angeles}.
\bmrnumber{0084922}
\end{binproceedings}
\endbibitem

\bibitem[\protect\citeauthoryear{Stein}{1981}]{Stein-1981}
\begin{barticle}[author]
\bauthor{\bsnm{Stein},~\bfnm{Charles~M.}\binits{C.~M.}}
(\byear{1981}).
\btitle{Estimation of the mean of a multivariate normal distribution}.
\bjournal{Ann. Statist.}
\bvolume{9}
\bpages{1135--1151}.
\bmrnumber{630098}
\end{barticle}
\endbibitem

\bibitem[\protect\citeauthoryear{Zhou and Hwang}{2005}]{Zhou-Hwang-2005}
\begin{barticle}[author]
\bauthor{\bsnm{Zhou},~\bfnm{Harrison~H.}\binits{H.~H.}} \AND
  \bauthor{\bsnm{Hwang},~\bfnm{J.~T.~Gene}\binits{J.~T.~G.}}
(\byear{2005}).
\btitle{Minimax estimation with thresholding and its application to wavelet
  analysis}.
\bjournal{Ann. Statist.}
\bvolume{33}
\bpages{101--125}.
\bmrnumber{2157797}
\end{barticle}
\endbibitem

\end{thebibliography}
\end{document}